\theoremstyle{plain}
\newtheorem{thm}{Theorem}[section]
\newtheorem{cor}[thm]{Corollary}
\newtheorem{prop}[thm]{Proposition}
\newtheorem{rem}[thm]{Remark}
\newtheorem{ques}[thm]{Question}
\newtheorem{conj}[thm]{Conjecture}
\def\cal{\mathcal}
\def\bbb{\mathbb}
\def\op{\operatorname}
\renewcommand{\phi}{\varphi}
\newcommand{\N}{\bbb{N}}
\newcommand{\Q}{\bbb{Q}}
\begin{document}

\title[On a Diophantine equation of Erd\H{o}s and Graham]{On a Diophantine equation of Erd\H{o}s and Graham}
\author{Szabolcs Tengely, Maciej Ulas and Jakub Zygad{\l}o}

\keywords{polynomial exponential Diophantine equation, Erd\H{o}s and Graham equation, sum of fractions}
\subjclass[2020]{20.050, 20.060}

\begin{abstract}
We study solvability of the Diophantine equation
\begin{equation*}
\frac{n}{2^{n}}=\sum_{i=1}^{k}\frac{a_{i}}{2^{a_{i}}},
\end{equation*}
in integers $n, k, a_{1},\ldots, a_{k}$ satisfying the conditions $k\geq 2$ and $a_{i}<a_{i+1}$ for $i=1,\ldots,k-1$. The above Diophantine equation (of polynomial-exponential type) was mentioned in the monograph of Erd\H{o}s and Graham, where several questions were stated. Some of these questions were already answered by Borwein and Loring. We extend their work and investigate other aspects of Erd\H{o}s and Graham equation. First of all, we obtain the upper bound for the value $a_{k}$ given in terms of $k$ only. This mean, that with fixed $k$ our equation has only finitely many solutions in $n, a_{1},\ldots, a_{k}$. Moreover, we construct an infinite set $\cal{K}$, such that for each $k\in\cal{K}$, the considered equation has at least five solutions. As an application of our findings we enumerate all solutions of the equation for $k\leq 8$. Moreover, by applying greedy algorithm, we extend Borwein and Loring calculations and check that for each $n\leq 10^4$ there is a value of $k$ such that the considered equation has a solution in integers $n+1=a_{1}<a_{2}<\ldots <a_{k}$. Based on our numerical calculations we formulate some further questions and conjectures.
\end{abstract}

\maketitle

\section{Introduction}\label{sec1}
In the very interesting book \cite{ErdGra} Erd\H{o}s and Graham stated many number theoretic problems. Some of them are related to Diophantine equations. At page 63 of this book the authors consider the following non-standard Diophantine equation
\begin{equation}\label{maineq}
\frac{n}{2^{n}}=\sum_{i=1}^{k}\frac{a_{i}}{2^{a_{i}}},\quad\mbox{where}\quad\;k>1,
\end{equation}
which can be seen as an equation of polynomial-exponential type. The authors stated some questions concerning this equation. For example, they asked whether for each $n\in\N_{+}$ there is a solution of (\ref{maineq}), i.e., we look for solutions in $k, a_{1},\ldots, a_{k}$; or whether for each $k\in\N_{+}$ there is a solution of (\ref{maineq}), i.e., we look for solutions in $n, a_{1},\ldots,a_{k}$. Moreover, the considered related problem of representations of real numbers $\alpha\in (0,2)$ in the form
$$
\alpha=\sum_{i=1}^{\infty}\frac{a_{i}}{2^{a_{i}}}.
$$
These questions were investigated by Borwein and Loring in \cite{BL}. In particular, in the cited paper, the authors proved that for each $k$ there is a solution of (\ref{maineq}). Moreover, they proposed an algorithm which, for a given $n$, allows to find (conjecturally finite) representation of $n/2^{n}$ in the form $\sum_{i=1}^{k}\frac{a_{i}}{2^{a_{i}}}$ \cite[Conjecture 1]{BL}. However, they do not investigate other Diophantine questions related to (\ref{maineq}). In particular, we are interested in the following questions, which in the light of findings of Borwein and Luring are quite natural.

\begin{ques}\label{ques1} What can be said about the number of solutions of {\rm (\ref{maineq})} when $k$ is fixed? Is it possible to enumerate all solutions of {\rm (\ref{maineq})} for small values of $k$?
\end{ques}

\begin{ques}\label{ques2}
Is it possible to bound $a_{k}$ in terms of $k$ only?
\end{ques}

Let us describe the content of the paper in some details. In Section \ref{sec2} we offer basic theoretical results concerning the solutions of (\ref{maineq}). We first enumerate all solutions of equation (\ref{maineq}) for $k\leq 8$. However, the most interesting part of this section is the proof of the inequality $a_k\leq 2^{k+2}+2k(\log_2 k-1)-4$. This answer Question \ref{ques2} affirmatively. In particular, for any given $k$, the considered equation has only finitely many effectively computable solutions in integers $n, a_{1},\ldots, a_{k}$.

In Section \ref{sec3}, by solving certain discrete logarithm problems we construct an infinite set $\cal{K}$, such that for each $k\in\cal{K}$ equation (\ref{maineq}) has at least five solutions in positive integers $n, a_{1},\ldots, a_{k}$. Moreover, we apply a modification of greedy strategy of Borwein and Loring and prove that for each $n\leq 10^4$ equation (\ref{maineq}) has a solution in positive integers $k, a_{1},\ldots, a_{k}$. As an application of our approach we construct an infinite set $\cal{R}$ of rational numbers, such that for each $x\in\cal{R}$ the number $x$ has at least nine representations in the form $\sum_{i=1}^{\infty}a_{i}/2^{a_{i}}$. Moreover, based on our numerical data we formulate precise conjecture concerning the quantity of $a_{k}$, i.e., $a_{k}\leq 2(n+k)$. We prove that our conjecture is true for all $n$ satisfying $n\geq 2^{k}-k$.

\section{Theoretical results}\label{sec2}

We start with some easy observations related to equation (\ref{maineq}).

\begin{thm}\label{thm1}
\begin{enumerate}
\item[(i)] Let $k$ be fixed. If the equation {\rm (\ref{maineq})} has a solution, then $n\leq 2^{k+1}-k-2$.

\item[(ii)] If {\rm (\ref{maineq})} holds then $n+1\leq a_{1}\leq n+3$ and $2^{a_{k}-a_{k-1}}|a_{k}$. Moreover, if $n\geq 2^{j+1}-j$ for some $1\leq j<k$, then
$$
a_i=n+i,\;for\;i=1,\ldots,j.
$$
\end{enumerate}
\end{thm}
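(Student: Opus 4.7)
The proof rests on two elementary ingredients that I will deploy repeatedly: the function $f(x)=x/2^{x}$ is strictly decreasing on $[2,\infty)$, and the explicit geometric-tail formula $\sum_{j=m}^{\infty}j/2^{j}=(m+1)/2^{m-1}$.

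For part (i) the plan is first to obtain the termwise lower bound $a_{i}\ge n+i$. The starting point is $a_{1}\ge n+1$: since $k\ge 2$ the remaining terms are positive, so $a_{1}/2^{a_{1}}<n/2^{n}\le 1/2$, which forces $a_{1}\notin\{1,2\}$, and then monotonicity of $f$ on $[2,\infty)$ gives $a_{1}>n$. Because the $a_{i}$ are strictly increasing integers, this upgrades to $a_{i}\ge n+i$ termwise. Monotonicity of $f$ now yields
\begin{equation*}
\frac{n}{2^{n}}=\sum_{i=1}^{k}\frac{a_{i}}{2^{a_{i}}}\ \le\ \sum_{i=1}^{k}\frac{n+i}{2^{n+i}},
\end{equation*}
and substituting $\sum_{i=1}^{k}1/2^{i}=1-1/2^{k}$ and $\sum_{i=1}^{k}i/2^{i}=2-(k+2)/2^{k}$ into the right side, then clearing $2^{n}$, rearranges to exactly $n\le 2^{k+1}-k-2$.

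The bound $a_{1}\ge n+1$ in part (ii) is the same as above. For $a_{1}\le n+3$ I would use the strict tail estimate
\begin{equation*}
\sum_{i=2}^{k}\frac{a_{i}}{2^{a_{i}}}\ <\ \sum_{j=a_{1}+1}^{\infty}\frac{j}{2^{j}}=\frac{a_{1}+2}{2^{a_{1}}}
\end{equation*}
(strict because $k$ is finite), which combined with the main equation gives $n\cdot 2^{a_{1}-n}<2a_{1}+2$; writing $a_{1}=n+s$ and testing $s\ge 4$ already forces $n<1$, so $s\le 3$. The divisibility $2^{a_{k}-a_{k-1}}\mid a_{k}$ drops out by multiplying the main equation by $2^{a_{k}}$ to obtain
\begin{equation*}
n\cdot 2^{a_{k}-n}=a_{k}+\sum_{i=1}^{k-1}a_{i}\cdot 2^{a_{k}-a_{i}};
\end{equation*}
every summand on the right except $a_{k}$ is divisible by $2^{a_{k}-a_{k-1}}$, and since $a_{k-1}\ge a_{1}+k-2\ge n+k-1>n$ the left side is as well.

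For the final assertion I proceed by induction on $j$. Assuming $a_{i}=n+i$ for $i<j$, a telescoping calculation (equivalently, solving the recursion $c_{j+1}=2c_{j}+j$ with $c_{1}=0$, whose solution is $c_{j}=2^{j}-j-1$) identifies the exact residual
\begin{equation*}
L_{j}\ :=\ \frac{n}{2^{n}}-\sum_{i=1}^{j-1}\frac{n+i}{2^{n+i}}\ =\ \frac{n-2^{j}+j+1}{2^{n+j-1}}.
\end{equation*}
Rerunning the tail argument of the previous paragraph on the truncated equation $L_{j}=\sum_{i=j}^{k}a_{i}/2^{a_{i}}$ and writing $a_{j}=n+j+t$ with $t\ge 0$ yields
\begin{equation*}
(n-2^{j}+j+1)\cdot 2^{t+1}\ <\ 2(n+j+t)+2.
\end{equation*}
At $t=1$ this rearranges to precisely $n<2^{j+1}-j$; the only nontrivial bookkeeping I anticipate is verifying that for every $t\ge 2$ the corresponding upper bound on $n$ is strictly sharper, which reduces to the elementary inequality $4\cdot 2^{j}+2t+2\le 2^{t+1}(2^{j}+1)$ (equality at $t=1$, strict for $t\ge 2$). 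Granted this, the single hypothesis $n\ge 2^{j+1}-j$ rules out every $t\ge 1$ at once, forcing $t=0$ and $a_{j}=n+j$, which completes the induction.
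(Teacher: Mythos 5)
Your proof is correct and follows essentially the same route as the paper: termwise comparison with the extremal configuration $a_i=n+i$, geometric tail estimates, and induction on $j$ for the final claim. The only differences are cosmetic --- you use infinite tails where the paper uses the finite sum $\sum_{i=1}^{k}(n+3+i)/2^{n+3+i}$ for the bound $a_1\leq n+3$, you multiply by $2^{a_k}$ rather than $2^{a_{k-1}}$ for the divisibility, and in the induction step you parametrize by $t=a_j-n-j$ and verify a monotonicity inequality over all $t\geq 2$, whereas the paper disposes of every $t\geq 1$ in one stroke by bounding the whole tail from $n+j+2$ onward.
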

\begin{proof}
Let us suppose that (\ref{maineq}) has a solution for $n$. It is clear that $a_{1}\geq n+1$ and thus $a_{i}\geq n+i$ for each $i\in\{1,\ldots,k\}$. Because the function $f(x)=x/2^{x}$ is decreasing for $x\geq 1$ We immediately get the inequality
$$
\frac{n}{2^{n}}=\sum_{i=1}^{k}\frac{a_{i}}{2^{a_{i}}}\leq \sum_{i=1}^{k}\frac{n+i}{2^{n+i}}=\frac{(2^{k}-1)n+2^{k+1}-k-2}{2^{n+k}}.
$$
By solving the resulting inequality we get the upper bound for $n$ in terms of $k$. Indeed, we have $n\leq 2^{k+1}-k-2$.

To prove the second part of our theorem let us suppose that $a_{1}\geq n+4$. Thus, $a_{i}\geq n+3+i$ for $i=1,\ldots, k$ and in consequence
$$
\frac{n}{2^{n}}=\sum_{i=1}^{k}\frac{a_{i}}{2^{a_{i}}}\leq \sum_{i=1}^{k}\frac{n+3+i}{2^{n+3+i}}=\frac{(2^{k}-1)n+5\cdot 2^k-k-5}{2^{n+k+3}}.
$$
By solving the resulting inequality with respect to $n$ we have
$$
n\leq \frac{5\cdot 2^k-6}{7\cdot 2^{k}+1}<1
$$
and get a contradiction. Thus $n+1\leq a_{1}\leq n+3$.

The divisibility $2^{a_{k}-a_{k-1}}|a_{k}$ is clear. Indeed, multiplying (\ref{maineq}) by $2^{a_{k-1}}$ we see that the number
$$
2^{a_{k-1}-n}n-\sum_{i=1}^{k-1}2^{a_{k-1}-a_{i}}a_{i}
$$
is an integer equal to $\frac{a_{k}}{2^{a_{k}-a_{k-1}}}$ and hence $2^{a_{k}-a_{k-1}}|a_{k}$.

Finally, to get the last statement we proceed by induction on $j\geq 1$. Let us start with $j=1$, so $n\geq 3$. If $a_1\neq n+1$, then $a_1\geq n+2$ and
$$
\frac{n}{2^n}=\sum_{i=1}^k\frac{a_i}{2^{a_i}}\leq\sum_{i=1}^k\frac{n+1+i}{2^{n+1+i}}<\sum_{i=n+2}^\infty\frac{i}{2^i}=\frac{n+3}{2^{n+1}},
$$ and so $n<3$, a contradiction.
Let us now take $n\geq 2^{j+2}-j-1$. Since also $n\geq 2^{j+1}-j$, by the induction hypothesis we get that $a_i=n+i$ for $1\leq i\leq j$. If $a_{j+1}\geq n+j+2$, then
\begin{align*}
\frac{n}{2^n}&=\sum_{i=1}^k\frac{a_i}{2^{a_i}}\leq\sum_{i=1}^j\frac{n+i}{2^{n+i}}+\sum_{i=j+1}^k\frac{n+1+i}{2^{n+1+i}}\\
             &<\frac{n(2^j-1)+2^{j+1}-j-2}{2^{n+j}}+\sum_{i=n+j+2}^\infty\frac{i}{2^i}\\
             &=\frac{n(2^j-1)+2^{j+1}-j-2}{2^{n+j}}+\frac{n+j+3}{2^{n+j+1}}=\frac{n(2^{j+1}-1)+2^{j+2}-j-1}{2^{n+j+1}}.
\end{align*}
 As a consequence we get $n<2^{j+2}-j-1$, a contradiction that completes the induction step.
\end{proof}

\begin{rem}
{\rm We observed that the necessary condition for solvability of (\ref{maineq}) is the inequality $n\leq 2^{k+1}-k-2$. This condition can not be improved. Indeed, as was observed by Borwein and Luring, if $k$ is fixed and $n=2^{k+1}-k-2$, then we have the equality
\begin{equation*}
 \sum_{i=1}^{k}\frac{n+i}{2^{n+i}}=\frac{n}{2^{n}},
\end{equation*}
i.e., equation (\ref{maineq}) has a solution $a_{i}=n+i, i=1,\ldots,k$.
}
\end{rem}
The divisibility property noted in the last part of Theorem \ref{thm1} can be strengthened as follows.

\begin{thm}
Let $a_1<\ldots<a_k$ be a solution to equation {\rm (\ref{maineq})} and $1\leq i\leq k-2$. Then
$$2^{a_k-a_i}\leq (a_{i+2}\cdot a_{i+3}\cdot\ldots\cdot a_{k-1}\cdot a_k)\cdot a_k$$
\begin{proof}
We first note that for $i=k-1$ we get $2^{a_k-a_{k-1}}\leq a_k$ by Theorem \ref{thm1}.(\ref{tw213}). We will proceed by induction on $j=k-i$, starting with $j=2$, i.e. $i=k-2$. Multiplying equation (\ref{maineq}) by $2^{a_{k-2}}$ we get
$$n\cdot 2^{a_{k-2}-n}=\sum_{s=1}^{k-2}a_s\cdot 2^{a_{k-2}-a_s}+\frac{a_{k-1}\cdot 2^{a_k-a_{k-1}}+a_k}{2^{a_k-a_{k-2}}}$$
and so $2^{a_k-a_{k-2}} | (a_{k-1}\cdot 2^{a_k-a_{k-1}}+a_k)$ and the second term is non-zero. Consequently $2^{a_k-a_{k-2}}\leq a_{k-1}\cdot 2^{a_k-a_{k-1}}+a_k\leq a_{k-1}\cdot a_k+a_k\leq (a_k-1)\cdot a_k+a_k=a_k^2$.

In the induction step we perform the same calculations. The only difference is that we multiply equation (\ref{maineq}) by $2^{a_{k-j}}$. As a result we obtain the equality
$$
n\cdot 2^{a_{k-j}-n}=\sum_{s=1}^{k-j}a_s\cdot 2^{a_{k-j}-a_s}+\frac{a_{k-j+1}\cdot 2^{a_k-a_{k-j+1}}+\ldots+a_{k-1}\cdot 2^{a_k-a_{k-1}}+a_k}{2^{a_k-a_{k-j}}},
$$
and thus $2^{a_k-a_{k-j}}|a_{k-j+1}\cdot 2^{a_k-a_{k-j+1}}+\ldots+a_{k-1}\cdot 2^{a_k-a_{k-1}}+a_k$. Now by the induction hypothesis:
\begin{align*}
&2^{a_k-a_{k-j}}\leq a_{k-j+1}\cdot a_{k-j+3}\cdot\ldots\cdot a_{k-1}a_k^2+\ldots+a_{k-2}a_k^2+a_{k-1}a_k+a_k\leq\\
&\leq (a_{k-j+2}-1)\cdot a_{k-j+3}\cdot\ldots\cdot a_{k-1}a_k^2+\ldots+(a_{k-1}-1)a_k^2+(a_k-1)\cdot a_k+a_k\leq\\
&\leq a_{k-j+2}\cdot a_{k-j+3}\cdot\ldots\cdot a_{k-1}a_k^2
\end{align*}
and the result follows.
\end{proof}
\end{thm}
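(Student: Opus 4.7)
The plan is to induct on $j := k - i$, with base case $j = 2$. The key device is the same as in the proof of Theorem \ref{thm1}.(ii): multiplying (\ref{maineq}) through by $2^{a_{k-j}}$ clears denominators for the initial $k-j$ summands and leaves a single tail fraction whose numerator must be divisible by $2^{a_k - a_{k-j}}$. Since that numerator is positive, this divisibility becomes an inequality.

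Concretely, after multiplying (\ref{maineq}) by $2^{a_{k-j}}$ the left-hand side $n \cdot 2^{a_{k-j} - n}$ is an integer (using $a_{k-j} \ge a_1 \ge n+1$ from Theorem \ref{thm1}.(ii)), and so are the first $k-j$ right-hand summands. What remains is $T / 2^{a_k - a_{k-j}}$ with
$$T \;=\; a_{k-j+1}\cdot 2^{a_k - a_{k-j+1}} + \cdots + a_{k-1}\cdot 2^{a_k - a_{k-1}} + a_k,$$
so $2^{a_k - a_{k-j}} \mid T$, and $T > 0$ forces $2^{a_k - a_{k-j}} \le T$.

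I would then bound $T$ term by term. For $s = k-1$, Theorem \ref{thm1}.(ii) provides $2^{a_k - a_{k-1}} \le a_k$, while for $k-j+1 \le s \le k-2$ the induction hypothesis (applied with $k - s < j$) gives $2^{a_k - a_s} \le a_{s+2}\cdots a_{k-1}\cdot a_k^2$. Setting $Q_s := a_{s+2}\cdots a_{k-1}\cdot a_k^2$ and using the strict inequality $a_s \le a_{s+1} - 1$ coming from $a_1 < \cdots < a_k$ being integers, we get $a_s \cdot Q_s \le (a_{s+1} - 1)Q_s = Q_{s-1} - Q_s$. Summing over $k-j+1 \le s \le k-2$ telescopes to $Q_{k-j} - Q_{k-2} = Q_{k-j} - a_k^2$, while the two remaining terms contribute $a_{k-1}\cdot 2^{a_k - a_{k-1}} + a_k \le a_{k-1}a_k + a_k = a_k(a_{k-1}+1) \le a_k^2$. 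Adding these yields $T \le Q_{k-j} = a_{k-j+2}\cdots a_{k-1}\cdot a_k^2$, which is the claimed bound.

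The main obstacle I expect is precisely this bookkeeping. The induction hypothesis does not cover the case $s = k-1$, so Theorem \ref{thm1}.(ii) has to be invoked separately as a "$j = 1$" base fact; and turning the sum into a clean product requires the telescoping trick built on $a_s \le a_{s+1} - 1$. The base $j = 2$ is then immediate: the telescoping sum is empty and only the last two terms contribute, giving $2^{a_k - a_{k-2}} \le a_{k-1}\cdot a_k + a_k \le a_k^2$.
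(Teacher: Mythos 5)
Your proposal is correct and follows essentially the same route as the paper's proof: the same induction on $j=k-i$, the same device of multiplying (\ref{maineq}) by $2^{a_{k-j}}$ to isolate a positive numerator divisible by $2^{a_k-a_{k-j}}$, and the same telescoping bound via $a_s\le a_{s+1}-1$, which the paper carries out inline rather than with your explicit $Q_s$ notation. No gaps.
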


As an immediate consequence from the above result we get the following.

\begin{cor}\label{corbound}
Let $a_1<\ldots<a_k$ be a solution to equation {\rm (\ref{maineq})}. We have $\frac{a_k^{k-1}}{2^{a_k}}\geq 2^{-a_1}$.
\end{cor}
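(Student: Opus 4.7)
The plan is to apply the previous theorem with $i=1$, since the desired inequality $a_k^{k-1}/2^{a_k}\geq 2^{-a_1}$ is equivalent to $2^{a_k-a_1}\leq a_k^{k-1}$, which directly matches the shape of the bound $2^{a_k-a_i}\leq (a_{i+2}\cdot a_{i+3}\cdots a_{k-1}\cdot a_k)\cdot a_k$ for the index $i=1$.

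First I would substitute $i=1$ into the theorem to obtain
$$
2^{a_k-a_1}\leq a_3\cdot a_4\cdots a_{k-1}\cdot a_k^{\,2}.
$$
Then I would count factors on the right-hand side: the product $a_3 a_4\cdots a_{k-1}$ contains $k-3$ terms, and together with the two extra copies of $a_k$ this yields exactly $k-1$ factors. Since the sequence is strictly increasing, each factor is bounded above by $a_k$, hence the product is bounded by $a_k^{k-1}$.

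Finally, I would rearrange the resulting inequality $2^{a_k-a_1}\leq a_k^{k-1}$ by dividing both sides by $2^{a_k}$ to recover the stated form $a_k^{k-1}/2^{a_k}\geq 2^{-a_1}$. I would also note the small edge case $k=2$: here the previous theorem is stated for $1\leq i\leq k-2$, so one must check it separately. For $k=2$ the claim reduces to $a_2/2^{a_2}\geq 2^{-a_1}$, i.e., $2^{a_2-a_1}\leq a_2$, which follows from the divisibility $2^{a_k-a_{k-1}}\mid a_k$ established in Theorem~\ref{thm1}(ii). No step here looks technically hard; the only thing to be careful about is the bookkeeping of how many factors appear on the right-hand side of the invoked bound and handling the degenerate $k=2$ case.
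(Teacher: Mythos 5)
Your proposal is correct and is exactly the argument the paper intends (it states the corollary as an ``immediate consequence'' of the preceding theorem without writing out the details): take $i=1$, note the right-hand side is a product of $k-1$ factors each at most $a_k$, and rearrange. Your separate treatment of the $k=2$ case via the divisibility $2^{a_2-a_1}\mid a_2$ from Theorem~\ref{thm1}(ii) is a welcome piece of care that the paper leaves implicit.
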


Using the last part of Theorem \ref{thm1} and Corollary \ref{corbound} we can try to list all possible solutions of equation (\ref{maineq}) for small values of $k$.
Indeed, part (i) of Theorem \ref{thm1} provides a bound on values of $n$ to check, while part (ii) gives exact values for starting elements in $(a_{i})$ for large $n$. However, for small values of $n$, part (ii) of Theorem \ref{thm1} is not effective and we need to apply Corollary \ref{corbound} to further reduce the set of possible solutions by bounding $a_k$ depending on $a_1$.
After some considerable amount of computer calculations (case $k=8$ took more than two days of computing time) we provide complete list of solutions of equation {\rm (\ref{maineq})} for $k\leq 8$:

\begin{thm}\label{smallk}
 Let $k\in\{2,3,4,5,6,7,8\}$ and let us put $A=(a_{1},a_{2},\ldots, a_{k})$. All solutions of the equation {\rm (\ref{maineq})} are the following:
\begin{equation*}
\begin{array}{lll}
k=2: & [n,A]=   &[4,(5,6)]; \\
k=3: & [n,A]\in &\{[1,(3,6,8)], [1,(4,5,6)], [2,(3,6,8)], [2,(4,5,6)], [3,(4,6,8)], \\
     &          & \quad [11,(12,13,14)]\}; \\
k=4: & [n,A]\in & \{[9,(10,11,13,14)], [26,(27,28,29,30)]\}; \\
k=5: & [n,A]\in & \{[5,(6,7,11,13,14)], [6,(7,8,11,13,14)], [15,(16,17,18,21,22)], \\
     &          & \quad [57,(58,59,60,61,62)]\};\\
k=6: & [n,A]\in & \{[4,(5,7,8,11,13,14)], [12,(13,14,15,20,21,24)], \\
     &          & \quad [13,(14,15,16,20,21,24)], [21,(22,23,24,26,27,32)],\\
     &          & \quad [120,(121,122,123,124,125,126)]\};\\
k=7: & [n,A]\in & \{[1,(4,5,7,8,11,13,14)], [2,(4,5,7,8,11,13,14)],\\
     &          & \quad [7,(8,9,11,15,20,21,24)], [18,(19,20,21,23,26,27,32)],\\
     &          & \quad [247,(248,249,250,251,252,253,254)]\};\\
k=8: & [n,A]\in & \{[17,(18,19,20,22,26,29,30,32)],\\
     &          & \quad [19,(20,21,22,24,26,29,30,32)],\\
     &          & \quad [197,(198,199,200,201,202,203,205,206)],\\
     &          & \quad [502,(503,504,505,506,507,508,509,510)]\}.
 \end{array}
\end{equation*}

\end{thm}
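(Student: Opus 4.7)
The proof is essentially an exhaustive but carefully pruned enumeration, so the plan is to explain precisely how the bounds already established reduce each case $k\in\{2,\ldots,8\}$ to a finite and tractable search, then to assert that a direct computer verification completes the list.

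First, I would fix $k$ and use part (i) of Theorem \ref{thm1} to restrict $n$ to the finite range $1\leq n\leq 2^{k+1}-k-2$. For each such $n$ I would then split according to the size of $n$: if $n\geq 2^{j+1}-j$ for the largest possible $j<k$, part (ii) of Theorem \ref{thm1} forces $a_i=n+i$ for $i=1,\ldots,j$, so only $k-j$ values remain unknown; in particular the "extremal" solutions $a_i=n+i$ with $n=2^{k+1}-k-2$ are forced and form the last row of each case. For the small-$n$ regime part (ii) determines at most the triple $\{n+1,n+2,n+3\}$ in which $a_1$ lies, so I would branch on the value of $a_1$.

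Given $n$ and the already forced initial segment of $(a_i)$, I would invoke Corollary \ref{corbound} to obtain an explicit upper bound on $a_k$: since $a_k^{k-1}\geq 2^{a_k-a_1}$, a short monotonicity argument gives $a_k\leq C(a_1,k)$ for an effectively computable constant. With $a_k$ bounded, the remaining indeterminate indices $a_{j+1}<\ldots<a_k$ range over a finite subset of $\{n+j+1,\ldots,C(a_1,k)\}$, so the equation can be tested directly. To actually accelerate the enumeration one can work with the rescaled integer equation obtained by multiplying (\ref{maineq}) through by $2^{a_k}$, and at each step keep only partial tuples whose residual $n\cdot 2^{a_k-n}-\sum_{i=1}^{s}a_i 2^{a_k-a_i}$ is still attainable by a completion with strictly increasing exponents, pruning by the divisibility condition $2^{a_k-a_{k-1}}\mid a_k$ from Theorem \ref{thm1}(ii) and its strengthening in the preceding theorem.

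The main obstacle is purely computational: for $k=8$ and small values of $n$ (where part (ii) of Theorem \ref{thm1} leaves $a_1$ undetermined), the bound on $a_k$ coming from Corollary \ref{corbound} is loose enough that a naive loop over all increasing $8$-tuples is infeasible. I would therefore implement the backtracking search described above, using the two-sided trapping
\[
\sum_{i=1}^{s}\frac{a_i}{2^{a_i}}+\sum_{i=s+1}^{k}\frac{a_s+i-s}{2^{a_s+i-s}}\leq\frac{n}{2^n}\leq\sum_{i=1}^{s}\frac{a_i}{2^{a_i}}+\frac{a_s+1}{2^{a_s}}\cdot\text{(tail)}
\]
at each partial tuple to cut infeasible branches early. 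After carrying out this search for each $k\leq 8$ (the case $k=8$ requiring the reported two days of CPU time) the complete list of solutions coincides with the one stated in the theorem; no other solutions arise.
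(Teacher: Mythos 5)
Your proposal is correct and follows essentially the same route as the paper: the authors likewise combine Theorem \ref{thm1}(i) to bound $n$, Theorem \ref{thm1}(ii) to pin down the initial segment of $(a_i)$ (or at least confine $a_1$ to $\{n+1,n+2,n+3\}$), and Corollary \ref{corbound} to bound $a_k$ in terms of $a_1$, reducing the problem to a finite computer search. The extra pruning heuristics you describe (backtracking with two-sided trapping and the divisibility conditions) are implementation details consistent with, though not spelled out in, the paper's account of its computation.
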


As a consequence of our computations we obtain an explicit family of rational numbers having at least three different representations in the form $\sum_{i=1}^{\infty}\frac{a_{i}}{2^{a_{i}}}$.

\begin{cor}\label{manyrep}
There are infinitely many values of $x\in\Q$ such that the number $x$ has at least three representations in the form
$$
x=\sum_{i=1}^{\infty}\frac{a_{i}}{2^{a_{i}}}.
$$
\end{cor}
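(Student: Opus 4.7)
The strategy is to extract a rational with several distinct representations from Theorem \ref{smallk} and then propagate that multiplicity to an infinite family of rationals by grafting a common infinite tail.

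First I would note that by Theorem \ref{smallk} the rational $1/2$ admits at least three distinct representations in the required form, arising from the solutions with $n=1$ (equivalently $n=2$): the tuples $(3,6,8)$ and $(4,5,6)$ listed in the case $k=3$, together with $(4,5,7,8,11,13,14)$ listed in the case $k=7$. The largest index appearing in any of these three seed representations is $14$.

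The next step relies on the elementary identity
\[
\sum_{j=M+1}^{\infty}\frac{j}{2^{j}}=\frac{M+2}{2^{M}},
\]
valid for every integer $M\geq 0$ (a standard geometric-series manipulation). For each $M\geq 14$ I would define
\[
y_{M}=\frac{1}{2}+\frac{M+2}{2^{M}}\in\Q.
\]
Because $M+1>14$, appending the infinite sequence $M+1,M+2,M+3,\ldots$ to each of the three seed representations of $1/2$ produces a strictly increasing sequence of positive integers whose associated series converges to $y_{M}$. This yields three distinct representations of $y_{M}$ in the prescribed form, namely
\[
(3,6,8,M+1,M+2,\ldots),\ \ (4,5,6,M+1,M+2,\ldots),\ \ (4,5,7,8,11,13,14,M+1,M+2,\ldots).
\]

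Finally, the rationals $y_{M}$ are pairwise distinct, since $M\mapsto(M+2)/2^{M}$ is strictly decreasing for $M\geq 1$; hence as $M$ runs through integers $\geq 14$ we obtain an infinite family of rationals each having at least three representations in the required form. There is no real technical obstacle in the proof: the whole content is the three-fold multiplicity of representations of $1/2$ supplied by Theorem \ref{smallk}, combined with the routine tail identity above; the only care needed is to choose $M$ large enough (here $M\geq 14$ suffices) so that the appended tail indices strictly exceed every index used in any of the three seed representations.
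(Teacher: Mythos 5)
Your proposal is correct and follows essentially the same route as the paper: it extracts the same three representations of $1/2$ from Theorem \ref{smallk} (the two solutions with $k=3$ and the one with $k=7$ for $n=1$) and appends a rational-valued infinite tail with indices exceeding $14$. The only cosmetic difference is that you vary the starting point $M$ of the tail of consecutive integers, while the paper varies the parameters $(p,q)$ of an arithmetic-progression tail $b_i=pi+q$; both yield the required infinite family.
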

\begin{proof}
From Theorem \ref{smallk} we see that for $n=1$ the equation (\ref{maineq}) has two solutions for $k=3$ and one solution for $k=7$. Let $(b_{i})_{i\in\N_{+}}$ be a sequence of positive integers satisfying $15\leq b_{1}$ and $b_{i}<b_{i+1}$ for $i=1, 2, \ldots$, and suppose that the number $x'=\sum_{i=1}^{\infty}\frac{b_{i}}{2^{b_{i}}}$ is rational. Then we have the representations
\begin{align*}
\frac{1}{2}+x'&=\frac{3}{2^{3}}+\frac{6}{2^{6}}+\frac{8}{2^{8}}+\sum_{i=1}^{\infty}\frac{b_{i}}{2^{b_{i}}}\\
              &=\frac{4}{2^{4}}+\frac{5}{2^{5}}+\frac{6}{2^{6}}+\sum_{i=1}^{\infty}\frac{b_{i}}{2^{b_{i}}}\\
              &=\frac{4}{2^4} + \frac{5}{2^5} + \frac{7}{2^7} + \frac{8}{2^8} + \frac{11}{2^{11}} + \frac{13}{2^{13}} + \frac{14}{2^{14}}+\sum_{i=1}^{\infty}\frac{b_{i}}{2^{b_{i}}},
\end{align*}
and from the assumption on the sequence $(b_{i})_{i\in\N_{+}}$ we know that the presented representations are different. In order to make the value of the sum $\sum_{i=1}^{\infty}\frac{b_{i}}{2^{b_{i}}}$ rational it is enough to take $b_{i}=pi+q$, where $p, q\in\N_{+}$ and $b_{1}=p+q>16$. Then we have
$$
\sum_{i=1}^{\infty}\frac{pi+q}{2^{pi+q}}=\frac{(q+p)2^p-q}{2^{q}(2^p-1)^2},
$$
a rational number.
\end{proof}

\begin{rem}
{\rm The above result was also obtained by Borwein and Luring using three representations of the number $\frac{1}{4}$. However, based on our computational approach, we will show in Corollary \ref{manyrepb} below, that in fact there are infinitely many rational numbers with at least nine representations of the form $\sum_{i=1}^{\infty}\frac{a_{i}}{2^{a_{i}}}$.}
\end{rem}

We close this section with the positive answer to Question \ref{ques2}.

\begin{thm}\label{thmbound}
Let $a_1<\ldots<a_k$ be a solution to equation {\rm (\ref{maineq})}. Then we have
$$a_k\leq 2n+2k\log_2 k.$$
\begin{proof}
It can be verified that the inequality holds for all solutions given in Theorem \ref{smallk}. So let us fix $k\geq 8$ and suppose that $a_k>2n+2k\log_2 k$ holds for some $n\geq 1$. Since $a_k>\frac{k-1}{\ln 2}$ and the function $f(x)=\frac{x^{k-1}}{2^x}$ is decreasing for $x>\frac{k-1}{\ln 2}$ we get that $\frac{a_k^{k-1}}{2^{a_k}}<\frac{(2n+2k\log_2 k)^{k-1}}{2^{2n+2k\log_2 k}}$. By Corollary \ref{corbound} this implies that
$$(2n+2k\log_2 k)^{k-1}>2^{2n+2k\log_2 k-a_1}\geq 2^{n-3+2k\log_2 k}$$
since $a_1\leq n+3$. But then
$$\Big(\frac{2n+2k\log_2k}{k^2}\Big)^{k-1}=\Big(\frac{2n+2k\log_2 k}{2^{2\log_2 k}}\Big)^{k-1}>2^{n-3+2\log_2 k}$$
We will show that this inequality is not satisfied for any $n$. Indeed, for $n=1$ we get:
$\big(\frac{2+2k\log_2k}{k^2}\big)^{k-1}\leq \big(\frac{2+16\log_2 8}{64}\big)^{k-1}<1$ and $2^{1-3+2\log_2 k}\geq 2^{-2+2\log_2 8}>1$, since $k\geq 8$ and considered functions are monotonic. Increasing $n$ by one, right hand side of the inequality is multiplied by 2, while left hand side by:
$$\Big(\frac{2n+2+2k\log_2 k}{2n+2k\log_2 k}\Big)^{k-1}=\Big(1+\frac{1}{n+k\log_2 k}\Big)^{k-1}\leq\Big(1+\frac{1}{2k}\Big)^{k-1}<e^{1/2}<2$$
This is a contradiction, so the inequality $a_k>2n+2k\log_2 k$ cannot hold.
\end{proof}
\end{thm}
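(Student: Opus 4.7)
The plan is to argue by contradiction: suppose there is a solution with $a_k > 2n + 2k\log_2 k$, and derive a contradiction by comparing the two estimates we already have on $a_k^{k-1}/2^{a_k}$, namely Corollary \ref{corbound} (a lower bound) and the observation that this quantity is small when $a_k$ is large. The small values of $k$ (say $k\leq 7$) should be handled separately by inspecting the explicit list in Theorem \ref{smallk}, so throughout the main argument one may assume $k\geq 8$.

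First I would combine the two inputs. From Theorem \ref{thm1}(ii) we have $a_1\leq n+3$, and so Corollary \ref{corbound} gives
\[
\frac{a_k^{k-1}}{2^{a_k}}\;\geq\;2^{-a_1}\;\geq\;2^{-(n+3)}.
\]
On the other hand, for $k\geq 8$ the threshold $(k-1)/\ln 2$ at which $f(x)=x^{k-1}/2^x$ starts decreasing is smaller than $2n+2k\log_2 k$, so the assumption $a_k>2n+2k\log_2 k$ lets us upper-bound $f(a_k)$ by $f(2n+2k\log_2 k)$. Putting the two inequalities together yields
\[
(2n+2k\log_2 k)^{k-1}\;>\;2^{\,n-3+2k\log_2 k}.
\]
Dividing both sides by $k^{2(k-1)}=2^{2(k-1)\log_2 k}$ brings this into the convenient normalized form
\[
\left(\frac{2n+2k\log_2 k}{k^2}\right)^{k-1}\;>\;2^{\,n-3+2\log_2 k}.
\]

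Next I would show that, for $k\geq 8$, the displayed inequality fails for every $n\geq 1$, by induction on $n$. For the base case $n=1$, the left side is bounded above by its value at $k=8$ (monotonicity in $k$) and is less than $1$, while the right side at $k=8$ already exceeds $1$. For the inductive step, note that when $n$ increases by $1$, the right side exactly doubles, while the left side is multiplied by
\[
\left(1+\frac{1}{n+k\log_2 k}\right)^{k-1}\;\leq\;\left(1+\frac{1}{2k}\right)^{k-1}\;<\;e^{1/2}\;<\;2,
\]
so the gap only widens. This gives the contradiction and closes the argument.

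The main obstacle is simply the bookkeeping: verifying the monotonicity threshold $(k-1)/\ln 2 < 2n+2k\log_2 k$ is what legitimizes plugging the lower bound on $a_k$ into $f$, and the key numerical fact that makes the induction work is the clean inequality $(1+1/(2k))^{k-1}<e^{1/2}<2$, which ensures the right-hand side outpaces the left-hand side uniformly in $n$. Beyond those estimates the proof is routine, and the small-$k$ cases are dispatched by direct inspection of Theorem \ref{smallk}.
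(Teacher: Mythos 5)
Your proposal is correct and follows essentially the same route as the paper's own proof: the same combination of Corollary \ref{corbound} with $a_1\le n+3$, the same monotonicity argument for $f(x)=x^{k-1}/2^x$ past the threshold $(k-1)/\ln 2$, the same normalization by $k^{2(k-1)}$, and the same induction on $n$ using the base case $n=1$ and the increment factor $\bigl(1+\tfrac{1}{2k}\bigr)^{k-1}<e^{1/2}<2$. No substantive differences to report.
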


\begin{cor}
Let $a_1<\ldots<a_k$ be a solution to equation {\rm (\ref{maineq})}. Then
$$a_k\leq 2^{k+2}+2k(\log_2 k-1)-4.$$
\end{cor}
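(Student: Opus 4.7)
The plan is to combine two results already established in the excerpt: Theorem \ref{thmbound}, which gives $a_k \leq 2n + 2k\log_2 k$, and Theorem \ref{thm1}(i), which gives $n \leq 2^{k+1} - k - 2$ whenever equation (\ref{maineq}) is solvable. The combination is immediate, so this is really a one-line derivation rather than a substantive argument.

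First, I would invoke Theorem \ref{thmbound} applied to the given solution $a_1 < \ldots < a_k$ of (\ref{maineq}), obtaining the inequality $a_k \leq 2n + 2k\log_2 k$. Next, I would substitute the bound $n \leq 2^{k+1} - k - 2$ coming from Theorem \ref{thm1}(i), valid because the existence of a solution is assumed. This yields
\[
a_k \leq 2(2^{k+1} - k - 2) + 2k\log_2 k = 2^{k+2} - 2k - 4 + 2k\log_2 k,
\]
which, after grouping the terms involving $k$, is exactly $2^{k+2} + 2k(\log_2 k - 1) - 4$.

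There is no genuine obstacle here, since both ingredients have already been proved. The only thing to double-check is that Theorem \ref{thmbound} truly applies to every solution (the proof there handled the cases $k \leq 8$ by direct verification using Theorem \ref{smallk} and treated $k \geq 8$ by the analytical argument), and that Theorem \ref{thm1}(i) is unconditional on $k$. Both points hold, so the corollary follows by plain substitution.
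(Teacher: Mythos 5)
Your derivation is correct and is exactly the intended one: the paper states this corollary immediately after Theorem \ref{thmbound} without a separate proof, precisely because it follows by substituting the bound $n\leq 2^{k+1}-k-2$ from Theorem \ref{thm1}(i) into $a_k\leq 2n+2k\log_2 k$. Your arithmetic checks out, so nothing further is needed.
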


\section{A computational approach to equation (\ref{maineq})  }\label{sec3}

We know that for any given $k$ the number of solutions of (\ref{maineq}), say $N(k)$, is bounded, and Theorem \ref{smallk} shows that
$$
N(2)=1,\;N(3)=6,\;N(4)=2,\;N(5)=4,\;N(6)=5,\;N(7)=3.
$$

We prove that there are infinitely many values of $k$ such that $N(k)\geq 5$. More precisely, we have

\begin{prop}\label{Nk}
If
\begin{eqnarray*}
k&\equiv & 10131316054712759135960334995313053617046\\
&&\pmod{20263657997642451746458664712008831939580},
\end{eqnarray*}
then the Diophantine equation {\rm (\ref{maineq})} has at least five solutions, i.e., $N(k)\geq 5.$
\end{prop}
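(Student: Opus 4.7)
The plan is to exhibit five distinct parametric families of solutions to (\ref{maineq}), each valid whenever $k$ satisfies a certain congruence condition, and then to combine the five conditions via the Chinese Remainder Theorem.

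I would first search for solution templates of the form $(a_1,\ldots,a_k)=(n+1,n+2,\ldots,n+j,\,m_1,\ldots,m_{k-j})$, where the prefix is the ``trivial'' run $n+1,\ldots,n+j$ and the tail $(m_1,\ldots,m_{k-j})$ is described by an explicit recipe depending on $k$. Such templates are suggested by the patterns visible in Theorem~\ref{smallk}: the recurring blocks $(20,21,24)$, $(26,27,32)$, and the gap pattern $(n{+}5,26,29,30,32)$ point to stable, extendable shapes. Using the elementary identity
\begin{equation*}
\frac{n}{2^n}-\sum_{i=1}^{j}\frac{n+i}{2^{n+i}}=\frac{n-t_j}{2^{n+j}},\qquad t_j:=2^{j+1}-j-2,
\end{equation*}
equation (\ref{maineq}) reduces, for each template, to a polynomial-exponential identity $n-t_j=c\cdot 2^{n+j-D}$ with $c$ odd and $D$ determined by the tail. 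Writing out the explicit dependence of $(c,D)$ on $k$, this becomes a joint Diophantine equation in $n$ and $k$.

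After eliminating $n$, each template's validity condition is a discrete logarithm problem of the shape $2^{\alpha k+\beta}\equiv\gamma\pmod{M_i}$. I would solve each such problem by factoring $M_i$, applying the Pohlig--Hellman algorithm on its prime-power factors, and recombining via CRT, thereby producing an arithmetic progression $k\equiv r_i\pmod{m_i}$ on which the $i$-th template yields a valid solution of (\ref{maineq}).

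Intersecting the five resulting progressions by a single application of CRT would then give a congruence $k\equiv r\pmod{M_0}$ with $M_0=\op{lcm}(m_1,\ldots,m_5)$. A direct computation should recover exactly $M_0=20263657997642451746458664712008831939580$ and $r=10131316054712759135960334995313053617046$. On this progression all five templates simultaneously produce solutions of (\ref{maineq}), and pairwise distinctness follows by inspecting the resulting sequences $(a_i)$ and the associated values of $n$.

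I expect the main obstacle to be template selection. Most candidate templates either yield only finitely many $(n,k)$ pairs or produce congruence conditions that are mutually incompatible. The delicate part is locating five templates whose individual progressions admit a common refinement that is both infinite and yields five genuinely distinct solutions; the discrete-logarithm and CRT computations themselves are routine once the templates have been fixed and the relevant moduli have been factored.
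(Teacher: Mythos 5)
Your overall strategy---parametric solution templates, reduction to discrete-logarithm congruences, and a final application of CRT---is essentially the route the paper takes, and your reduction identity is exactly the one used there: with the template $a_i=n+i$ for $i\leq k-2$, $a_{k-1}=n+k+u$, $a_k=n+k+u+1$, one obtains $n=2^{k-1}-k+\frac{3\cdot 2^{k-1}+3u+1}{2^{u+3}-3}$, so each admissible $u$ yields an arithmetic progression of valid $k$'s via the discrete logarithm $2^{k-1}\equiv \frac{-3u-1}{3}\pmod{2^{u+3}-3}$.

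However, there is a genuine gap at the final step. You plan to intersect \emph{five} congruence-based progressions and assert that this CRT computation ``should recover exactly'' the stated modulus. It does not: the authors checked all 4368 five-element subsets of the sixteen progressions arising from the admissible $u\leq 120$ and found that \emph{no five of them have a common solution}; the stated modulus is the intersection of only \emph{four} progressions, namely those for $u\in\{2,9,55,99\}$. The missing idea is that the fifth solution costs nothing: for every $k$ the choice $n=2^{k+1}-k-2$, $a_i=n+i$ for $i=1,\ldots,k$ solves (\ref{maineq}) unconditionally (this is the extremal solution noted in the remark after Theorem \ref{thm1}), and it is distinct from the four template solutions since those have a smaller value of $n$ and a non-consecutive tail. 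Without this observation your plan stalls, at least within the natural template family, at the empirical fact that no five congruences are simultaneously satisfiable, so you would need either this free fifth solution or a genuinely new template shape to close the argument.
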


Before we prove the above proposition, we describe the experimental strategy which we used. More precisely, we looked for values of $u$ and corresponding value(s) of $k$ such that there is a positive integer solution $n$ of the equation
$$
\frac{n}{2^{n}}=\sum_{i=1}^{k-2}\frac{n+i}{2^{n+i}}+\frac{n+k+u}{2^{n+k+u}}+\frac{n+k+u+1}{2^{n+k+u+1}},
$$
i.e., we look for integral values of the expression
$$
n=\frac{(2^{k-1}-k)(2^{u+3}-3)+3\cdot2^{k-1}+3u+1}{2^{u+3}-3}=2^{k-1}-k+\frac{3\cdot2^{k-1}+3u+1}{2^{u+3}-3}.
$$
Equivalently, we need to consider the polynomial-exponential congruence
\begin{equation}\label{cong}
3\cdot2^{k-1}+3u+1\equiv 0\pmod{2^{u+3}-3}.
\end{equation}

Note that if for a given $u$ the congruence (\ref{cong}) has a solution in $k$, then necessarily $k<r:=\op{ord}_{2^{u+3}-3}(2)$, where as usual $\op{ord}_{m}(a)=\op{min}\{v\in\N_{+}:\;a^{v}\equiv 1\pmod{m}\}$. In particular, if $k_{0}$ is a solution for $u$, then for each $t\in\N$ the number $k=rt+k_{0}$ is also a solution.
Congruence \eqref{cong} can be written as
$$
2^{k-1}\equiv\frac{-3u-1}{3}\pmod{2^{u+3}-3},
$$
hence one has to resolve a discrete logarithm problem.
There are exactly 16 values of $u\leq 120$ such that (\ref{cong}) has a solution, see table below.
{\tiny
\begin{center}
\begin{tabular}{|r|r|r|} \hline
$u$ & $k_0$ & $r$ \\ \hline
$0$ & $4$  & $4$ \\ \hline
$1$ & $5$ & $12$ \\ \hline
$2$ & $22$ & $28$ \\ \hline
$3$ & $48$ & $60$ \\ \hline
$4$ & $83$ & $100$ \\ \hline
$6$ & $221$ & $508$ \\ \hline
$9$ & $242$ & $4092$ \\ \hline
$11$ & $5531$ & $16380$ \\ \hline
$17$ & $66328$ & $1048572$ \\ \hline
$21$ & $2796185$ & $5592404$ \\ \hline
$22$ & $775376$ & $1116130$ \\ \hline
$26$ & $96489490$ & $536870908$ \\ \hline
$55$ & $5843993308712118$ & $26202761468337430$ \\ \hline
$99$ & $364550281031913286431277811782$ & $2535300206192230667655098198606$ \\ \hline
$113$ & $2452672773763126728478631379525174$ & $83076749736557242056487941267521532$ \\ \hline
$119$ & $3303995011423016739508338720636484139$ & $5316911983139663491615228241121378300$ \\ \hline
\end{tabular}
\begin{center}\small
Table 1. Solutions for $k$ of (\ref{cong}) for $u\leq 120$ together with the value of $r=\op{ord}_{2^{u+3}-3}(2)$.
\end{center}
\end{center}
}

\begin{proof}[Proof of {\rm Proposition \ref{Nk}}]
The idea of the proof is the following. If we write $f_{i}(x)=r_{i}x+k_{i}$, where $k_{i}, r_{i}$ correspond to $i$th elements in the table above, then to get values of $k$ such that (\ref{maineq}) has at least $m$ solutions, it is enough to find solutions of the system
$$
f_{i_{1}}(x_{1})=f_{i_{2}}(x_{2})=\ldots=f_{i_{m}}(x_{m})
$$
for certain $1\leq i_{1}<i_{2}<\ldots <i_{m}\leq 16$. We checked all 4368 combinations of five elements subsets of the set of linear functions $\{f_{1},\ldots,f_{15}\}$ and found that in each case, the above system has no solutions. In case of four functions we checked 1820 subsets and found exactly six subsets such that the above system has solutions. The simplest solutions are obtained in the case of the linear Diophantine system
\begin{eqnarray*}
&&28x_1 + 22=4092x_2 + 242=\\
&&=26202761468337430x_3 + 5843993308712118=\\
&&=2535300206192230667655098198606x_4 + 364550281031913286431277811782.
\end{eqnarray*}
The corresponding linear functions give (all) solutions of (\ref{cong}) for $u=2, 9, 55, 99$, respectively.
A standard method gives the solution
\begin{eqnarray*}
x_1&=&5192346432901574483898091387790622531230191866907+\\
&&+ 16989949871052950679749955447565756090108796474835t\\
x_2&=&35529252229043031659126725038645510966384499578+\\
&&+ 116255766468593015403958639426158643822836339515t\\
x_3&=&5548487715576217653155322603550910+\\
&&+ 18155284776542563811078158200217566t\\
x_4&=&57344569990628045006 +\\
&&+187637974874764336230t.
\end{eqnarray*}
where $t\in\N$, and the corresponding common value of $k$ is given by
\begin{eqnarray*}
k=k(t)&=& 10131316054712759135960334995313053617046+\\
&&+{20263657997642451746458664712008831939580t},
\end{eqnarray*}
In consequence, for given $u\in\{2,9,55,99\}$ and each $t\in\N$ we get an integer value of $n$ for $k=k(t)$ together with values of $a_{1},\ldots,a_{k}$ given by $a_{i}=n+i, i=1,\ldots, k-2, a_{k-1}=n+k+u, a_{k}=n+k+u+1$. Thus for any given $k=k(t)$ we have four solutions of (\ref{maineq}). One additional solution for $k$ corresponds to $n=2^{k+1}-k-2$ and $a_{i}=n+i, i=1,\ldots,k$.

\end{proof}

We finish our discussion with the following:

\begin{conj}\label{conjA}
Let us put
$$
\cal{U}=\{u\in\N_{+}:\;\mbox{congruence}\;(\ref{cong})\; \mbox{has a solution}\}.
$$
The set $\cal{U}$ is infinite.
\end{conj}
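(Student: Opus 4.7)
The plan is to reformulate (\ref{cong}) as a discrete logarithm problem and then attempt a density argument. Since $2^{u+3}\equiv(-1)^{u+3}\pmod 3$ is never $0$, the modulus $m_u:=2^{u+3}-3$ is coprime to $3$, so $3$ is a unit modulo $m_u$ and (\ref{cong}) is equivalent to
$$
2^{k-1}\equiv c_u\pmod{m_u},\qquad c_u:=-(3u+1)\cdot 3^{-1}\bmod m_u.
$$
Hence $u\in\cal{U}$ if and only if $c_u$ lies in the cyclic subgroup $H_u:=\langle 2\rangle\le(\Z/m_u\Z)^{*}$, of order $r_u=\op{ord}_{m_u}(2)$. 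The goal is thus to show this membership holds for infinitely many $u$.

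The primary strategy I would pursue is a reduction to the prime case through the factor structure of $m_u$. If $p$ is an odd prime divisor of $m_u$ for which $2$ is a primitive root modulo $p$, then every unit modulo $p$ is automatically a power of $2$; membership of $c_u$ in $\langle 2\rangle\bmod p$ then only fails on the thin set of $u$ with $p\mid 3u+1$, and one can try to lift through all prime factors of $m_u$ by CRT. It would therefore suffice to find infinitely many $u$ for which $m_u$ is prime and $2$ is a primitive root of it. This reduction leans on two classical but unproven inputs: Hooley's conditional form of Artin's primitive-root conjecture, which gives a positive density of primes for which $2$ is a primitive root, and the widely believed heuristic that $2^{n}-3$ is prime for infinitely many $n$ (expected to yield $\gg\log X$ primes $m_u$ with $u\le X$). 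Combined, these two inputs would insert infinitely many values into $\cal{U}$.

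The hard part -- and the reason this remains a conjecture rather than a theorem -- is that both inputs above are themselves open problems; we do not even know unconditionally that $2^{u+3}-3$ is squarefree for infinitely many $u$. A softer, purely heuristic argument observes that $H_u$ has density $r_u/\varphi(m_u)$ in $(\Z/m_u\Z)^{*}$; if $c_u$ behaves like a random unit as $u$ varies, the probability of landing in $H_u$ is this ratio, and since on average $r_u$ is expected to be a positive proportion of $\varphi(m_u)$ the series $\sum_{u\ge 1}r_u/\varphi(m_u)$ diverges, predicting infinitely many successes by a Borel--Cantelli-style rationale. Turning this into a rigorous statement demands an equidistribution theorem for the explicit element $c_u$ inside the sparse subgroup $H_u$, where both the element and the subgroup depend on $u$ and the modulus grows exponentially in $u$ -- well beyond current technology. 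Realistically, any near-term progress along this line will be conditional on Artin-type primitive-root hypotheses together with a prime-production conjecture for the sequence $(2^{n}-3)_{n\ge 1}$, which is presumably why the authors formulate Conjecture \ref{conjA} as a conjecture.
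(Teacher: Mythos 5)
The statement you were asked about is Conjecture \ref{conjA}, and the paper itself offers \emph{no proof} of it: the authors state it purely on the strength of the numerical evidence in Table 1 (sixteen values of $u\leq 120$ for which (\ref{cong}) is solvable). So there is no proof in the paper to compare yours against, and you have correctly refrained from claiming one. Your opening reduction --- that $3$ is invertible modulo $m_u=2^{u+3}-3$ and that solvability of (\ref{cong}) is equivalent to $c_u=-(3u+1)\cdot 3^{-1}$ lying in $\langle 2\rangle\leq(\Z/m_u\Z)^{*}$ --- is exactly the reformulation the paper makes right after displaying (\ref{cong}), and it is correct. Everything beyond that (Hooley/Artin for the primitive-root input, the heuristic that $2^{n}-3$ is prime infinitely often, the Borel--Cantelli-style density argument) is additional content not in the paper; it is a reasonable account of why the conjecture should be true and of why it is out of reach, and you are right that each ingredient is itself open.

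One small technical caution in your sketch: when $m_u$ is composite, knowing that $c_u\in\langle 2\rangle$ modulo every prime factor of $m_u$ does \emph{not} by CRT give $c_u\in\langle 2\rangle$ modulo $m_u$; the discrete logarithms at the various primes must also be compatible modulo the respective orders of $2$, and prime-power factors need separate treatment. You implicitly sidestep this by retreating to the case where $m_u$ is itself prime with $2$ a primitive root, which is the clean (fully conditional) version of the reduction, so the final shape of your argument is sound as a heuristic. In short: the paper proves nothing here, you prove nothing here, and both of you agree on the discrete-logarithm reformulation; your write-up honestly labels its conditional inputs, which is all one can ask of a ``proof attempt'' for an open conjecture.
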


\begin{conj}\label{conjB}
We have $\limsup\limits_{k\rightarrow +\infty}N(k)=\infty$.
\end{conj}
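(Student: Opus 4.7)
The plan is to extend the construction underlying Proposition \ref{Nk} in two stages. Stage one widens the family of base solutions: for each $\ell\geq 2$ and each integer tuple $0\leq v_1\leq v_2\leq\dots\leq v_\ell$, consider the ansatz $a_i=n+i$ for $1\leq i\leq k-\ell$ and $a_{k-\ell+j}=n+k-\ell+j+v_j$ for $1\leq j\leq\ell$. Substituting into (\ref{maineq}) and clearing $2^n$ yields, in direct analogy with the derivation of (\ref{cong}), a polynomial-exponential congruence of the shape $P(k,\mathbf{v})\equiv 0\pmod{Q(\mathbf{v})}$ whose solutions in $k$ form (when they exist) an arithmetic progression $k\equiv k_0(\mathbf{v})\pmod{r(\mathbf{v})}$ with $r(\mathbf{v})\mid\op{ord}_{Q(\mathbf{v})}(2)$. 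A preliminary task is to settle Conjecture \ref{conjA}: for $\ell=2$ this reduces to the discrete-logarithm density statement that $(-3u-1)/3$ lies in the cyclic subgroup $\langle 2\rangle\subset(\Z/(2^{u+3}-3)\Z)^{\times}$ for infinitely many $u$, and the same analysis must be repeated for $\ell\geq 3$, where the parameter space of $\mathbf{v}$ grows rapidly and should supply abundantly many progressions.

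Stage two combines these progressions by a Chinese Remainder Theorem argument. Given any prescribed $m$, one selects $m$ tuples $\mathbf{v}^{(1)},\dots,\mathbf{v}^{(m)}$ for which the pairwise compatibility conditions $k_0(\mathbf{v}^{(s)})\equiv k_0(\mathbf{v}^{(t)})\pmod{\gcd(r(\mathbf{v}^{(s)}),r(\mathbf{v}^{(t)}))}$ all hold. Any $k$ in the resulting joint progression then admits at least $m$ displacement-type solutions of (\ref{maineq}). Adjoining the universal solution $n=2^{k+1}-k-2$ from the remark after Theorem \ref{thm1}, we obtain $N(k)\geq m+1$, and since $m$ is arbitrary and infinitely many such $k$ exist by taking multiples of the combined modulus, Conjecture \ref{conjB} follows.

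The main obstacle is the compatibility step. Table 1 already exhibits strong $2$-adic alignment of the moduli $r(\mathbf{v})$ in the $\ell=2$ family—every listed value is divisible by $4$—and the authors explicitly report that no $5$-subset of those fifteen linear functions is jointly compatible. Overcoming this requires either (a) enlarging the search to $\ell\geq 3$, where the additional freedom ought to diversify the $2$-adic valuations of the $r(\mathbf{v})$ enough to produce large compatible subsets, or (b) a structural analysis of $\op{ord}_{Q(\mathbf{v})}(2)$ controlling the $\gcd$ obstructions as $m$ grows. I expect the genuine difficulty to lie in proving, rather than verifying by computer, that such compatible $m$-tuples exist for arbitrarily large $m$; any rigorous route likely demands a new number-theoretic input—for example, an effective equidistribution statement for $k\mapsto 2^k$ in the relevant cyclic groups—which is presumably why the claim remains a conjecture rather than a theorem in the paper.
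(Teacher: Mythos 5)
This statement is a \emph{conjecture} in the paper: the authors give no proof of it, and they even pose as a separate open question whether Conjecture \ref{conjA} would imply Conjecture \ref{conjB}. So your proposal is not competing with an argument in the paper; it has to stand on its own as a proof, and it does not. It is a strategy outline whose two essential steps are both left unestablished. Stage one asks for infinitely many tuples $\mathbf{v}$ for which the associated polynomial-exponential congruence is solvable; for $\ell=2$ this is exactly Conjecture \ref{conjA}, which is open, and you offer no mechanism for deciding when $(-3u-1)/3$ lies in the subgroup generated by $2$ modulo $2^{u+3}-3$ beyond restating the discrete-logarithm problem. (There is also a smaller unaddressed point in your ansatz: for each admissible $\mathbf{v}$ one must still check that the resulting $n$ is a positive integer and that the $m$ solutions obtained for a common $k$ are pairwise distinct, though this is routine compared with the main issues.)

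Stage two is the more serious gap, and it is the one the paper's own data flags. To get $N(k)\geq m+1$ for arbitrary $m$ you need, for every $m$, an $m$-tuple of pairwise compatible arithmetic progressions $k\equiv k_0(\mathbf{v})\pmod{r(\mathbf{v})}$, and the authors report that already for $m=5$ this fails across all $4368$ five-element subsets of the $\ell=2$ family with $u\leq 120$: only certain four-element subsets are compatible, which is why Proposition \ref{Nk} stops at $N(k)\geq 5$ (four displacement solutions plus the universal one). Your proposed remedies --- enlarging to $\ell\geq 3$, or a structural analysis of $\op{ord}_{Q(\mathbf{v})}(2)$ --- are hopes, not arguments: you give no reason the $2$-adic alignment of the moduli $r(\mathbf{v})$ should dissipate, and no control on the orders of $2$ modulo the numbers $Q(\mathbf{v})$. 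You candidly say that a new number-theoretic input would be required and that this is presumably why the statement remains a conjecture; that assessment is correct, but it also means the proposal contains no proof, only a (reasonable) research programme whose load-bearing steps are precisely the open problems.
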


Our proof of Proposition \ref{Nk} based on the existence of certain elements in the set $\cal{U}$. Thus, one can ask the following

\begin{ques}
Suppose that {\rm Conjecture \ref{conjA}} is true. Does {\rm Conjecture \ref{conjA}} implies {\rm Conjecture \ref{conjB}}?
\end{ques}

It seems that the most interesting (and difficult) question concerning equation (\ref{maineq}) is to whether, for a given $n$, there is $k\in\N_{+}$ such that (\ref{maineq}) has a solution. Essentially, this is \cite[Conjecture 1]{BL}. Unfortunately, we were unable to answer this question in full generality. Borwein and Luring proved that for each $n\leq 10^3$ equation (\ref{maineq}) has at last one solution.  We were able to extend the range of computations and prove the following:

\begin{thm}\label{10000}
For each $2\leq n\leq 10^4$ the Diophantine equation {\rm (\ref{maineq})} has a solution in variables $k, a_{1},\ldots, a_{k}$ satisfying $a_{i}=n+1$ and $a_{i}\geq n+i$ for $i=2,\ldots k$.
\end{thm}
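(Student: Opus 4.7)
The theorem is a finite computational assertion which I would establish by verifying the existence of a solution for each $n\in\{2,\ldots,10^{4}\}$ separately, using a modified greedy procedure of the Borwein--Loring type. For fixed $n$, set $a_{1}:=n+1$; the remaining residual to represent is
$$
r_{1}=\frac{n}{2^{n}}-\frac{n+1}{2^{n+1}}=\frac{n-1}{2^{n+1}}.
$$
Now iterate the greedy step: given a positive rational residual $r_{j}$ and the last exponent $a_{j}$, put
$$
a_{j+1}:=\min\bigl\{\,a>a_{j}\ :\ a/2^{a}\le r_{j}\,\bigr\},\qquad r_{j+1}:=r_{j}-a_{j+1}/2^{a_{j+1}}.
$$
Since $x/2^{x}$ is strictly decreasing for $x\ge 2$, the value $a_{j+1}$ is uniquely determined and is located by a short search near $\lceil-\log_{2}r_{j}\rceil$. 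I store $r_{j}$ as a pair consisting of its odd numerator and the $2$-adic valuation of its denominator, so that every step is exact even when denominators become very large. Success is declared when $r_{j+1}=0$, at which point $(j+1,a_{1},\ldots,a_{j+1})$ is the desired tuple.

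Two safeguards are needed, since the pure greedy rule is not known to terminate in general (its termination for all $n$ is exactly \cite[Conjecture~1]{BL}). First, whenever the run seems to stall---for example, the $2$-adic valuation of the denominator of $r_{j}$ fails to decrease for many consecutive steps, or the running $a_{j}$ approaches the a priori bound $a_{k}\le 2n+2k\log_{2}k$ of Theorem \ref{thmbound}---I backtrack and replace the greedy pick by the next admissible candidates $a_{j+1}+1,a_{j+1}+2,\ldots$. Second, every branch is pruned using the elementary feasibility test
$$
r_{j}\le\sum_{i>a_{j}}\frac{i}{2^{i}}=\frac{a_{j}+2}{2^{a_{j}}},
$$
which must hold at any extension of the current partial solution; a violation shows the current branch to be infeasible and it is discarded.

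The main obstacle I anticipate is controlling the combinatorial size of the backtracking tree. For the vast majority of $n\le 10^{4}$ the pure greedy rule terminates in modestly many steps, but a nonempty subset of $n$ demands genuine backtracking and a careless implementation can explode in time and memory. The combination of the bound of Theorem \ref{thmbound} with the tail-sum feasibility test keeps the search within reasonable limits (comparable to the runtimes reported for Theorem \ref{smallk}), and running the resulting procedure over $n\in\{2,\ldots,10^{4}\}$ produces, for every such $n$, an explicit tuple $(k,a_{1},\ldots,a_{k})$ with $a_{1}=n+1$ and $a_{i}\ge n+i$ for $i\ge 2$; tabulating these witnesses completes the proof.
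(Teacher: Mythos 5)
Your proposal is correct and follows essentially the same route as the paper: a per-$n$ computational verification via the Borwein--Loring greedy algorithm, implemented with exact integer arithmetic on the numerators of the residuals. The only difference is that your backtracking safeguard turns out to be unnecessary --- the paper reports that the pure greedy rule already terminates for every $2\le n\le 10^4$ (sometimes only after very many steps, e.g.\ $k=460536$ for $n=5588$), so no branching is ever invoked.
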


We now describe a computational method which was used to get the above result. More precisely, in order to confirm that equation {\rm (\ref{maineq})} has a solution, the following "greedy" strategy was applied: assuming that we have found the sequence $(a_1,\ldots,a_l)$ such that $\frac{n}{2^n}>\sum_{i=1}^l\frac{a_l}{2^l}$, we define $a_{l+1}=j$ where $\frac{j}{2^j}$ is the first term that "fits", that is we take the smallest $j$ such that
$$
\frac{n}{2^n}\geq\sum_{i=1}^l\frac{a_i}{2^{a_i}}+\frac{j}{2^j},
$$
and hope that this process ends after a finite number of steps. Naive implementation of the procedure above leads to a very slow algorithm for large $n$, so we apply a different approach that is a slight modification of Algorithm 2 given in \cite{BL}.

Let $x\in\Q$, $0<x<2$ and define: $k_0=\min\{k\geq 1\colon \frac{k}{2^k}<x\}$.
We define a sequence $S(x)=(x_{k_0},x_{{k_0}+1},\ldots)$ as
$$x_{k_0}=x\cdot 2^{k_0-1}$$
and for $i\geq k_0$
$$
x_{i+1}=\begin{cases}2\cdot x_i-i,&\textnormal{ if }2\cdot x_i-i\geq 0,\\2\cdot x_i,&\textnormal{ otherwise.}
\end{cases}
$$
We say that the sequence $S(x)$ terminates if $x_i=0$ for some $i\geq k_0$ (and for all subsequent values in the sequence). It is not difficult to see that $x_i$ is precisely the numerator of the fraction
$$
\frac{x_i}{2^{i-1}}=x-\sum_{j=1}^{i-1}s_j\cdot\frac{j}{2^j},
$$
where $s_j=1$ if $\frac{j}{2^j}$ appears in the sum when applying the greedy strategy for $x$ (with the exception of $s_j=0$ if $x=\frac{j}{2^j}$) and $s_j=0$ otherwise. Moreover, if the sequence $S(x)$ terminates, then $s_j=1$ (i.e. $\frac{j}{2^j}$ appears in the representation of $x$) if and only if $x_{j+1}\neq 2x_j$.

The above algorithm was implemented in {\sc Mathematica} \cite{math} in the following form:

{\tt
\begin{verbatim}
greedy[x_, maxK_] :=   Module[{ind = {}, k = 0, v = x, n = 1},
    While[2*v < n + 1, v = 2*v; n++];
    While[v > 0 && k <= maxK, If[2*v - n >= 0,
    AppendTo[ind, n]; k++; v = 2*v - n, v = 2*v]; n++];
    If[v == 0, Return[ind]]]
\end{verbatim}
}

The value {\tt maxK} is the maximal value of $k$ which is used in calculations. Thus, if we evaluate {\tt greedy[$41/2^{41}$,10]}, then our program will terminate without any result. However, if we evaluate {\tt greedy[$41/2^{41}$,20]}, then our program returns
\begin{equation*}
\{{\tt 42, 43, 44, 45, 47, 49, 54, 55, 56, 61, 66, 68, 69, 70}\}.
\end{equation*}

Our observations show that to find a representation of $x$ using the greedy strategy we can calculate $S(x)$ and see if it terminates. This has the advantage of being much faster as the only operations involved are multiplication by 2 and subtraction (of integers if $x$ has power of 2 as the denominator). Moreover it can be easily verified that $x_i<i+1$ for all $i\geq k_0$ and so $x_i<k$ if the sequence $S(x)$ terminates after $k$ steps (or equivalently the representation for $x$ has $k$ terms), i.e. the numbers $x_i$ are feasible.

First of all, we note that our approach is strong enough to present an improvement of Corollary \ref{manyrep}. More precisely, we prove that
\begin{cor}\label{manyrepb}
There are infinitely many values of $x\in\Q$ such that $x$ has at least nine representations in the form
$$
x=\sum_{i=1}^{\infty}\frac{a_{i}}{2^{a_{i}}}.
$$
\end{cor}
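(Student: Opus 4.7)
The plan is to upgrade the argument of Corollary \ref{manyrep}, replacing the three representations of $1/2$ by nine representations of some $n/2^{n}$.

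First, I would perform an exhaustive computational search for an integer $n$ (say $n\leq 10^{4}$, to match the range of Theorem \ref{10000}) such that equation (\ref{maineq}) admits at least nine distinct solutions with that value of $n$. The search is made finite by combining the restriction $a_{1}\in\{n+1,n+2,n+3\}$ from Theorem \ref{thm1}(ii) with the effective upper bound on $a_{k}$ supplied by Corollary \ref{corbound}. Concretely, I would run a depth-first enumeration in which at each node one tries \emph{all} feasible values of $a_{l+1}>a_{l}$ rather than only the smallest, as in the greedy procedure. This enumeration should produce an explicit integer $n$ together with nine distinct finite representations
$$
A^{(j)}=(a^{(j)}_{1}<\ldots<a^{(j)}_{k_{j}}),\quad j=1,\ldots,9,
$$
of $n/2^{n}$.

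Once such $n$ is in hand, set $A=\max_{j}a^{(j)}_{k_{j}}$ and choose any strictly increasing sequence of positive integers $b_{1}<b_{2}<\ldots$ with $b_{1}>A$. Appending the common tail $\sum_{i=1}^{\infty}b_{i}/2^{b_{i}}$ to each $A^{(j)}$ yields nine infinite representations of
$$
x=\frac{n}{2^{n}}+\sum_{i=1}^{\infty}\frac{b_{i}}{2^{b_{i}}},
$$
and they are pairwise distinct because the newly added indices $b_{i}$ are disjoint from those appearing in any of the nine heads. Specialising to $b_{i}=pi+q$ with $p,q\in\N_{+}$ and $p+q>A$, the tail evaluates to the rational number $((q+p)2^{p}-q)/(2^{q}(2^{p}-1)^{2})$, exactly as in Corollary \ref{manyrep}, and varying $(p,q)$ produces infinitely many rational values of $x$, each admitting at least nine representations.

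The genuine obstacle is the first step. While the search space is finite by Corollary \ref{corbound} (and considerably smaller in view of Theorem \ref{thmbound}), it still grows quickly with $n$, so the branching enumeration has to be implemented efficiently --- for example by re-using the integer iteration $x_{i+1}=2x_{i}-i$ or $x_{i+1}=2x_{i}$ underlying the sequence $S(x)$ introduced in this section --- to actually reach an $n$ at which nine representations occur. The remaining two steps are a verbatim adaptation of the argument used for three representations in Corollary \ref{manyrep}.
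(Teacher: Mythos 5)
Your second and third steps (appending a common arithmetic-progression tail $b_{i}=pi+q$ with $p+q$ larger than every index already used, and evaluating that tail to the rational number $((q+p)2^{p}-q)/(2^{q}(2^{p}-1)^{2})$) are exactly what the paper does, and they are fine. The gap is in the first step, and you have correctly sensed where the difficulty lies but not closed it. Your claim that the search is ``made finite'' by Theorem \ref{thm1}(ii) together with Corollary \ref{corbound} does not hold: those results bound $a_{k}$ in terms of $k$ (and $a_{1}$, hence $n$), but for a \emph{fixed} $n$ nothing in the paper bounds the number of terms $k$ --- indeed the greedy representation of $56/2^{56}$ already has $k=6092$ terms and that of $5588/2^{5588}$ has $k=460536$, so a depth-first enumeration of \emph{all} representations of a single $n/2^{n}$ has no a priori finite depth. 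Even truncated to small $k$ the search is hopeless in practice: the paper's complete enumeration for $k\le 8$ took over two days of computation and shows that no $n$ admits more than three representations in that range, so nine representations of one $n/2^{n}$ are not reachable this way, and the existence of a suitable $n$ is never established in your argument.

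The paper sidesteps the search entirely by a chaining trick: starting from $x=\frac{8}{2^{8}}=\frac{1}{32}$, it runs the fast greedy iteration $S(\cdot)$ once to obtain $x=\sum_{i=1}^{k_{1}}a_{1,i}/2^{a_{1,i}}$, then expands the \emph{last term} $a_{1,k_{1}}/2^{a_{1,k_{1}}}$ by another greedy run, and iterates this nine times. Each step replaces the final summand of the previous representation by a longer sum with strictly larger indices, so after $m$ expansions one has $m$ pairwise distinct representations of the same $x$, each produced by a single cheap run of the integer recursion rather than by enumeration. To repair your proposal, replace the exhaustive search by this iterated expansion of the last term (or some other explicit mechanism that manufactures a new representation from an old one); as written, the key existence claim is left unproved and is not obtainable by the search you describe.
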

\begin{proof}
To get the result it is enough to find one rational number $x$ with nine representations. The idea is very simple. Suppose that we have $x=\sum_{i=1}^{k_{1}}\frac{a_{1,i}}{2^{a_{1,i}}}$ and for $m\geq 2 $ we are able to compute the expansion
$$
\frac{a_{m-1,k_{m-1}}}{2^{a_{m-1,k_{m-1}}}}=\sum_{i=1}^{k_{m}}\frac{a_{m,i}}{2^{a_{m,i}}}.
$$
Thus, the number $x$ will have at least $m$ representations
$$
\sum_{i=1}^{k_{1}}\frac{a_{1,i}}{2^{a_{1,i}}},\quad \sum_{i=1}^{k_{1}-1}\frac{a_{1,i}}{2^{a_{1,i}}}+\sum_{i=1}^{k_{2}}\frac{a_{2,i}}{2^{a_{2,i}}},\ldots,\quad \sum_{j=1}^{m-1}\left(\sum_{i=1}^{k_{j}-1}\frac{a_{j,i}}{2^{a_{j,i}}}\right)+\sum_{i=1}^{k_{m}}\frac{a_{m,i}}{2^{a_{m,i}}}.
$$

We take $x=\frac{8}{2^8}=\frac{1}{32}$ and applying our greedy strategy we compute
\begin{align*}
\frac{1}{32}=&\frac{9}{2^{9}}+\frac{10}{2^{10}}+\frac{12}{2^{12}}+\frac{14}{2^{14}}+\frac{18}{2^{18}}+\frac{19}{2^{19}}+\\
             &\frac{21}{2^{21}}+\frac{22}{2^{22}}+ \frac{24}{2^{24}}+\frac{26}{2^{26}}+\frac{29}{2^{29}}+\frac{30}{2^{30}}+\frac{32}{2^{32}},
\end{align*}
i.e., $k_{1}=13, a_{i,k_{1}}=32$. Further values of $k_{i}$ and $a_{i,k_{i}}$ for $i\leq 9$ are as follows
$$
\begin{array}{c|ccccccccc}
  i           & 1 & 2   & 3 & 4 & 5 & 6 & 7 & 8 & 9 \\
  \hline
  k_{i}       & 13 & 9  & 169 & 5919  & 71826  & 252200 & 182973  & 10861   & 1195089 \\
  \hline
  a_{i,k_{i}} & 32 & 46 & 392 & 12230 & 155942 & 659488 & 1025582 & 1047128 & 3437088
\end{array}
$$
Due to size of the sets $\{a_{k_{j},i}:\;i=1,\ldots, k_{j}\}, j=2,\ldots,9$, we do not present them in full.

By adding the value of the series $\sum_{i=1}^{\infty}\frac{pi+q}{2^{pi+q}}=\frac{(q+p)2^p-q}{2^{q}(2^p-1)^2}$, where $p, q\in\N_{+}$ are chosen that $p+q>3437088$, to found representations, we get the statement of our theorem.
\end{proof}

With the data needed to get Theorem \ref{10000} we observed that the behaviour of $k=k(n)$ and $a_{k}=a_{k}(n)$ behaves quite irregular. For example, from our numerical data we collected the following peak (or jump) values of $k$.

\begin{equation*}
\begin{array}{|l|l|l|c|}
\hline
  n        & k(n)    & a_{k}(n) & \max\{k(i):\;i< n\} \\
\hline
  56       & 6092    & 12230   & 189 \\
  3113     & 13370   & 29752   & 6092 \\
  3817     & 76072   & 155942  & 13370 \\
  5588     & 460536  & 226913  & 76072 \\
  \hline
\end{array}
\end{equation*}
\begin{center} Table 2. Peak values among values of $k=k(n)$ \end{center}

In the picture below we also present the graph of the function
$$
k:\;\N_{\geq 2}\ni n\mapsto k(n)\in\N.
$$

\begin{figure}[h]\label{Pic1} 
       \centering
         \includegraphics[width=4in]{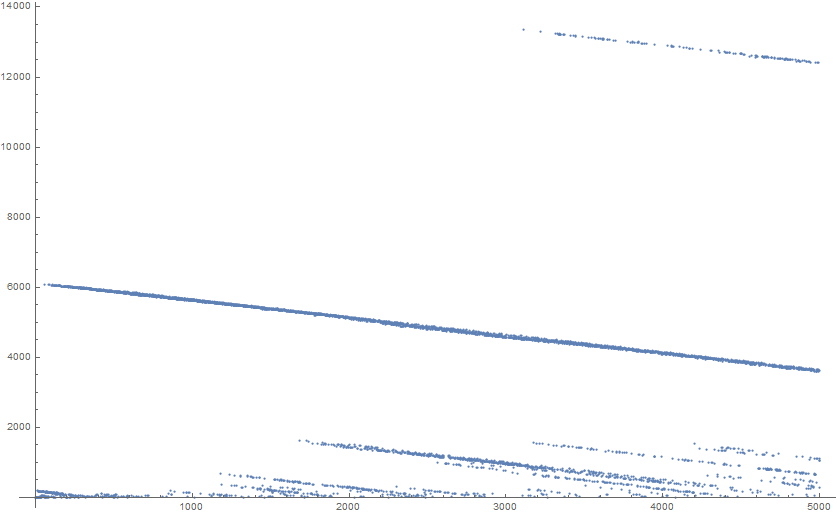}
        \caption{The value of $k$ such that $\frac{n}{2^{n}}=\sum_{i=1}^{k}\frac{a_{i}}{2^{a_{i}}}$ for some $a_{1},\ldots, a_{k}\in\N$ and $n\leq 5000$}
       \label{fig:disc1}
    \end{figure}

Based on our numerical data we formulate the following.

\begin{conj}\label{conj1}
If the Diophantine equation
$$
\frac{n}{2^{n}}=\sum_{i=1}^{k}\frac{a_{i}}{2^{a^{i}}}
$$
has a solution $(n, k, a_{1},\ldots, a_{k})$ with $a_{1}<a_{2}<\ldots <a_{k}$, then $k+n\leq a_{k}\leq 2(k+n)$. In particular $a_{k}\leq 4(2^{k}-1)$.
\end{conj}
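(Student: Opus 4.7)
The lower bound $n+k \leq a_k$ is immediate: Theorem \ref{thm1}(ii) gives $a_1 \geq n+1$, and strict monotonicity of the $a_i$ forces $a_k \geq a_1 + (k-1) \geq n+k$. The content of the conjecture is therefore the upper bound $a_k \leq 2(n+k)$. My plan is to establish this upper bound in the large-$n$ regime $n \geq 2^k - k$ (which, in view of the necessary condition $n \leq 2^{k+1}-k-2$ from Theorem \ref{thm1}(i), is roughly the top half of the admissible range for $n$), and to isolate the range $n < 2^k - k$ as the source of the remaining difficulty.

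Assume $n \geq 2^k - k$. First I would apply Theorem \ref{thm1}(ii) with $j = k-1$ (handling the boundary $n = 2^k - k$ separately, since the induction there formally requires $n \geq 2^k - k + 1$) to conclude $a_i = n+i$ for all $i = 1, \ldots, k-1$. Using the closed form
$$\sum_{i=1}^{k-1}\frac{n+i}{2^{n+i}} = \frac{(2^{k-1}-1)n + 2^k - k - 1}{2^{n+k-1}},$$
equation (\ref{maineq}) collapses to the single-unknown relation
$$\frac{a_k}{2^{a_k}} = \frac{n+k+1-2^k}{2^{n+k-1}}.$$
Writing $D := n+k+1-2^k$ (so $D \geq 1$ is precisely the hypothesis) and $s := a_k - (n+k-1) \geq 1$, clearing denominators yields the clean identity
$$D\cdot 2^s = n+k-1+s.$$

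Because $D \geq 1$, this identity forces $2^s \leq n+k-1+s$, so $s$ is at most logarithmic in $n+k$; in particular $s \leq n+k+1$ once $n+k$ exceeds a small absolute constant, the residual small cases being covered by the explicit enumeration in Theorem \ref{smallk}. Hence $a_k = n+k-1+s \leq 2(n+k)$, as required. The ``in particular'' clause then follows by combining with Theorem \ref{thm1}(i): $a_k \leq 2(n+k) \leq 2(2^{k+1}-2) = 4(2^k-1)$.

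The main obstacle is the complementary range $n < 2^k - k$, where Theorem \ref{thm1}(ii) no longer pins down the initial block $a_1, \ldots, a_{k-1}$, and (\ref{maineq}) decouples into a highly under-determined system with many free parameters. The empirical data in Table 2 and Figure \ref{fig:disc1} show $k(n)$ and $a_k(n)$ behaving extremely erratically in this regime, with enormous sporadic peaks, and I see no purely analytic way of ruling out a solution with $a_k \gg 2(n+k)$ by the methods above. This is precisely why the statement is formulated only as a conjecture outside the large-$n$ window handled by the reduction to the identity $D \cdot 2^s = n+k-1+s$.
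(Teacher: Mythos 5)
What you are being asked to prove is stated in the paper only as a conjecture, and the paper does not prove it: the remark following Conjecture \ref{conj1} establishes exactly the same partial result you do, namely the upper bound $a_k\leq 2(n+k)$ under the additional hypothesis $n\geq 2^k-k$, leaving the range $n<2^k-k$ open. So your proposal correctly reproduces the paper's state of knowledge, including the honest identification of where the difficulty lies. The technical route differs slightly. You invoke Theorem \ref{thm1}(ii) with $j=k-1$ to pin down $a_i=n+i$ exactly for $i\leq k-1$, which requires $n\geq 2^k-k+1$ and forces you to defer the boundary case $n=2^k-k$; you then get the clean identity $D\cdot 2^s=n+k-1+s$. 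The paper instead only uses the inequality $a_i\geq n+i$ together with the monotonicity of $x/2^x$ to get $\frac{n+k+1-2^k}{2^{n+k-1}}\leq\frac{a_k}{2^{a_k}}$, and derives a contradiction from $a_k>2(n+k)$ by comparing both sides; this covers the whole range $n\geq 2^k-k$ uniformly, including your deferred boundary case, at the cost of an inequality rather than an identity. Your identity buys more (it determines $a_k$ exactly in this regime), and your worry about ``small residual cases'' is unnecessary: $2^s-s\leq n+k-1$ already forces $s\leq n+k+1$ for every $n,k\geq 1$, since $2^{m+2}>2m+1$ for all $m\geq 0$. Your derivations of the lower bound $a_k\geq n+k$ and of the ``in particular'' clause $a_k\leq 2(n+k)\leq 4(2^k-1)$ via Theorem \ref{thm1}(i) agree with what the paper intends. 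Just be explicit that neither your argument nor the paper's settles the conjecture for $n<2^k-k$.
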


On Figure 2 we present the behaviour of $a_{k}(n)/2(k+n)$, where the values of $a_{i}=a_{i}(n)$ come from our greedy algorithm.

\begin{figure}[h]\label{Pic2} 
       \centering
         \includegraphics[width=4in]{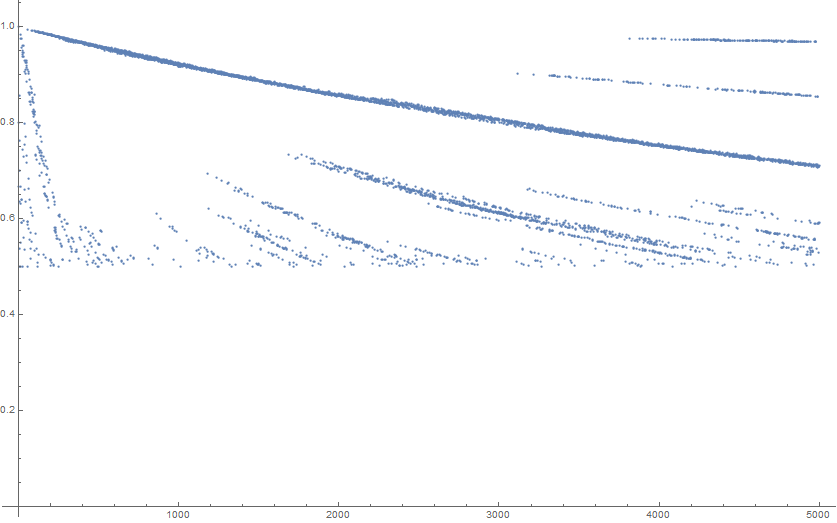}
        \caption{The value of the quotient $a_{k}(n)/2(k+n)$ from the greedy representation $\frac{n}{2^{n}}=\sum_{i=1}^{k}\frac{a_{i}}{2^{a^{i}}}$ for $n\leq 5000$}
       \label{fig:disc2}
    \end{figure}

On Figure 3 we also present the graph of the function $k(n)/n$.

\begin{figure}[h]\label{Pic4} 
       \centering
         \includegraphics[width=4in]{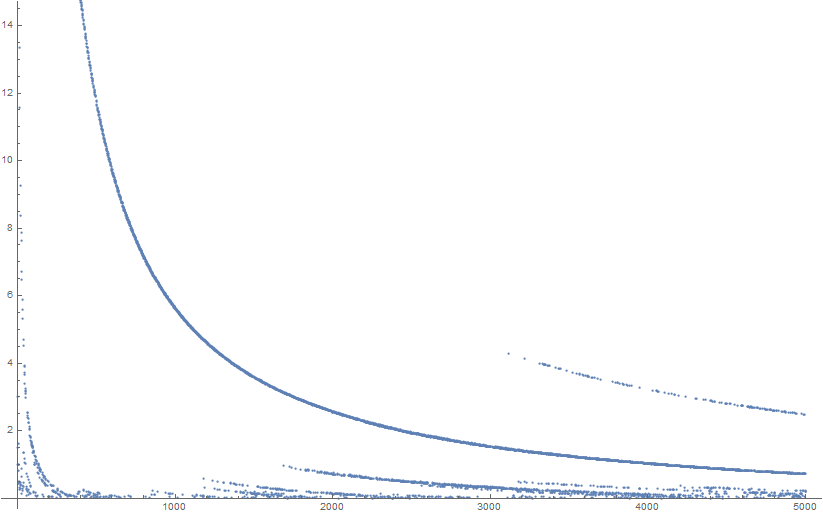}
        \caption{Plot of the ratio $k(n)/n$ coming from the greedy algorithm for $n\leq 5000$}
       \label{fig:disc4}
    \end{figure}

\begin{rem}
{\rm First of all let us observe that the value $n+k$ in the lower bound cannot be replaced by nothing greater. Indeed, if $n=2^{k+1}-k-2$ then we get the exact value $a_{k}=n+k$.

The upper bound for $a_{k}$ stated in the above conjecture is reasonable. More precisely, it is easy to see that our statement is true under additional assumption $n\geq 2^{k}-k$. Indeed, we have
$$
\frac{n}{2^{n}}=\sum_{i=1}^{k}\frac{a_{i}}{2^{a_{i}}}\leq \sum_{i=1}^{k-1}\frac{n+i}{2^{n+i}}+\frac{a_{k}}{2^{a_{k}}}.
$$
Equivalently, we have the inequality
$$
\frac{n+k+1-2^{k}}{2^{n+k-1}}\leq \frac{a_{k}}{2^{a_{k}}}.
$$
Let us assume that $a_{k}>2(k+n)$ and $2^{k}-k\leq n\leq 2^{k+1}-k-2$. Then, we have the inequality
$$
\frac{n+k+1-2^{k}}{2^{n+k-1}}\leq \frac{a_{k}}{2^{a_{k}}}\leq \frac{2(n+k)}{2^{2(n+k)}}\;\Longleftrightarrow \; 2^{n+k+1}(n+k+1-2^{k})\leq 2(n+k).
$$
Using the lower bound $2^{k}-k\leq n$ on the left hand side of the inequality and the upper bound $n\leq 2^{k+1}-k-2$ on the right hand side we get
$$
2^{n+k+1}\leq 2^{n+k+1}(n+k+1-2^{k})\leq 2(n+k)\leq 2(2^{k+1}-k-2+k)=4(2^{k}-1)
$$
and thus $2^{n+k-1}\leq 2^{k}-1$ - a contradiction.
}
\end{rem}


\noindent {\bf Acknowledgments.}
The research of the first author was partially supported in part by grants ANN130909, K115479 and of the Hungarian National Foundation for Scientific Research. The research of the second author was partially supported by the grant of the Polish National Science Centre no. UMO-2019/34/E/ST1/00094.

\bigskip

\noindent Szabolcs Tengely, Mathematical Institute, University of Debrecen, P.O.Box 12, 4010 Debrecen, Hungary\\
e-mail:\;{\tt tengely@science.unideb.hu}
\bigskip

\noindent  Maciej Ulas, Jakub Zygad{\l}o, Jagiellonian University, Faculty of Mathematics and Computer Science, Institute of Mathematics, {\L}ojasiewicza 6, 30 - 348 Krak\'{o}w, Poland\\
e-mail:\;{\tt $\{$maciej.ulas,jakub.zygadlo$\}$@uj.edu.pl}

 \end{document}